\documentclass[11pt]{amsart}
\usepackage[margin=1.2in]{geometry}
\usepackage{amsmath,amssymb,amsthm}
\usepackage{booktabs}
\usepackage[colorlinks=true,linkcolor=blue,citecolor=blue,urlcolor=blue]{hyperref}

\newtheorem{theorem}{Theorem}[section]
\newtheorem{proposition}[theorem]{Proposition}
\newtheorem{lemma}[theorem]{Lemma}

\theoremstyle{definition}
\newtheorem{question}[theorem]{Question}
\newtheorem{remark}[theorem]{Remark}

\newcommand{\F}{\mathbb{F}}
\newcommand{\Z}{\mathbb{Z}}
\newcommand{\C}{\mathbb{C}}
\renewcommand{\P}{\mathsf{P}}
\newcommand{\Ball}[1]{B(#1)}
\newcommand{\supp}{\operatorname{supp}}
\newcommand{\ru}{r_{\mathrm{u}}}
\newcommand{\msupp}{m_{\mathrm{u}}}

\title[Unit-support geometry of non-unique-product groups]{Localizing the Gardam
unit: the support geometry of units in $\F_2[\P]$ and its
non-unique-product relatives}
\author{Moe Tabei}
\address{Independent researcher, Japan}
\email{tabei@ryun.jp}

\begin{document}

\begin{abstract}
Gardam's counterexample to the Kaplansky unit conjecture is a unit of
$\F_2[\P]$, $\P$ the Promislow group, with support of size $21$; Gardam asked
whether $21$ is least possible. Over $\F_2$ the unit equation is a parity
condition on a pair of supports, searchable over word balls. We upgrade two
statements to machine-checked certificates: no support
pair of sizes $\ge2$ lies in the radius-$3$ ball (a DRAT proof of a ball form of the Craven--Pappas theorem), and every
nontrivial unit with supports in the radius-$4$ ball has total support at
least $42$, confirming Gardam's expectation in ball-limited form. The
relatives $H_4=F(3,4)$ and the Nielsen--Soelberg groups are swept with
certificates; $G_3$ is Gardam's amalgam $S$, whose unit we localize into its
radius-$4$ ball. $H_4$ resists the twisted-unitary ansatz through
radius $6$ for every non-identity dihedral twist. An effective localization
principle makes the least support of a nontrivial unit of $\F_2[\P]$
computable in principle.
\end{abstract}

\maketitle

\section{Introduction}\label{sec:intro}

Three conjectures on group rings of torsion-free groups are attributed to
Kaplansky: the unit conjecture, the zero divisor conjecture and the idempotent
conjecture. The unit conjecture is now false in every characteristic: Gardam
\cite{Gardam} produced a nontrivial unit of $\F_2[\P]$, where $\P$ is
Promislow's group (the Hantzsche--Wendt group, the unique torsion-free
$3$-dimensional crystallographic group with finite abelianization; for its
continuing role in the non-unique-product landscape see also the recent
\cite{BengiWise}); Murray
\cite{Murray} extended the construction to $\F_p$ for every prime $p$, and
Gardam has since disproved the characteristic-zero case as well
\cite{GardamC}. That same paper, although its main theorem is over $\C$,
also exhibits a \emph{second} counterexample group already over $\F_2$: an
amalgam $S$ of two Klein bottle groups over $\Z^2$, whose group ring
$\F_2[S]$ carries a $29$-term twisted-unitary unit with symmetry group
$\Z/4$ \cite[Thm.~B]{GardamC}. The zero divisor and idempotent conjectures remain open in
general, but are theorems for the groups in this paper, all of which are
elementary amenable \cite{KLM}.

What remains on the unit side is quantitative. Gardam's unit has support of
cardinality $21$, and he writes that it ``appears plausible that non-trivial
units in $\F_2[\P]$ always have support of cardinality at least 21''
\cite[\S2]{Gardam}. The question of \emph{where} --- in the geometry of the
group --- the failure of the unit conjecture first appears was opened
experimentally by Dietrich--Lee--Nies--Vinyals \cite{DLNV}, who found that
the radius-$3$ ball of $\P$ carries no unit while the radius-$4$ ball
carries exactly $36$, all of which they enumerated. What has been missing
is certification, minimality thresholds, theory, and the view beyond $\P$;
those are the contributions of this paper.

This paper answers both questions in ball-limited form, with the exact-model
machinery built in \cite{companion,sequel} for the parallel study of minimal
non-unique-product (non-UP) sets. Our starting point is elementary but
consequential: over $\F_2$ the group-ring product \emph{is} the mod-$2$
multiplicity count, so being a unit is a parity condition on the pair of
supports (Proposition~\ref{prop:parity}), and for amenable groups the
condition is sufficient as well as necessary. Units in characteristic~$2$ are
therefore exactly as searchable as non-UP sets, and every non-existence claim
below is a finite, machine-checkable statement.

Write $\Ball{r}$ for the word ball of radius $r$ centred at the identity in
the stated generators, and define the \emph{unit localization radius}
\[
\ru(G) \;=\; \min\{\,r : \F_2[G] \text{ has a nontrivial unit with }
\supp\alpha,\ \supp\alpha^{-1}\subseteq \Ball{r}\,\},
\]
and, for a radius $r$, the \emph{ball-limited minimal unit support}
\[
\msupp(G;r) \;=\; \min\{\,|\supp\alpha|+|\supp\alpha^{-1}| :
\alpha \text{ a nontrivial unit, both supports} \subseteq \Ball{r}\,\}.
\]

\subsection*{Results} All computations are exact; positive claims are
re-verified by solver-free counting, and the negative claims marked DRAT
carry machine-checked unsatisfiability proofs (Section~\ref{sec:methods}).

\begin{itemize}
\item \emph{Certified localization} (Theorem~\ref{thm:ru}): $\ru(\P)=4$ in
the standard generators. The negative half is a special case of the
Craven--Pappas theorem on dihedral-quotient length $\le3$
\cite[Thms.~10.4--10.6 and 11.2]{CravenPappas} (see \emph{Relation to prior work} below);
both halves were observed computationally in
\cite{DLNV}; here the $\Ball3$ ($41$ elements) non-existence is re-derived
independently and DRAT-certified, in the unrestricted form that no pair of
subsets of sizes $\ge2$ satisfies the unit parity condition, and an explicit
unit with supports of sizes $(21,21)$ in $\Ball4$ ($83$ elements) is found
by our own unrestricted search and re-verified by direct counting. Our
transcription of Gardam's unit confirms word radius exactly $5$
(cf.\ \cite[Rem.~1.3]{DLNV}): the original unit does not sit in a
ball-minimal position.
\item \emph{Ball-limited minimality} (Theorem~\ref{thm:mu}):
$\msupp(\P;4)=42$, with the lower bound DRAT-certified by a $90$-way
size-split sweep. Within $\Ball4$, Gardam's expectation holds in
total-support form: every nontrivial unit has total support at least $42$.
(His expectation is stated per support; within $\Ball4$ the per-support
form follows from the census of \cite{DLNV}, all of whose $36$ units have
supports of size $21$ --- our sweep is the certificate-backed floor under
that enumeration.)
\item \emph{The relatives} (Theorem~\ref{thm:zoo}): the Fibonacci group
$H_4=F(3,4)$ admits no nontrivial unit of $\F_2[H_4]$ with supports in
$\Ball3$ ($119$ elements), and the Nielsen--Soelberg groups likewise resist:
$G_1$ and $G_3$ up to radius $3$, $G_2$ at radius $1$. For $G_3$ this is
sharp, and in a way that identifies a second group with a known unit
localization radius: the universal Nielsen--Soelberg group $G_3$ \emph{is}
the amalgam $S$ of \cite[\S4]{GardamC} (Lemma~\ref{lem:g3isS}), whose
Theorem~B exhibits a nontrivial unit of $\F_2[S]$ of support $29$; we
localize that unit into $\Ball4$ of $G_3$, so that
$\ru(G_3)=4$ (Proposition~\ref{prop:rug3}). Thus $\P$ is not the only
group whose unit-bearing ball is pinned down.
\item \emph{Effective localization} (Theorem~\ref{thm:decideu}): for each
$n$ there is an explicit $D_{\mathrm u}(n)\le 4^{\,n}\mathrm{poly}(n)$ such
that if $\F_2[\P]$ has a nontrivial unit of total support $n$, it has one
with both supports in $\Ball{D_{\mathrm u}(n)}$. Hence ``does $\F_2[\P]$
contain a nontrivial unit of total support $n$'' is decidable, and the
minimal support of a nontrivial unit of $\F_2[\P]$ --- the global form of
Gardam's question --- is computable in principle. The new ingredient
relative to the non-UP analogue in \cite{companion} is a parity coarsening
lemma (Lemma~\ref{lem:coarse}).
\end{itemize}

Section~\ref{sec:compare} places these numbers alongside the minimal non-UP
sizes $m_1,m_2$ of \cite{companion,sequel}: the parity rigidity of the unit
condition is quantitatively expensive ($42$ against $m_2(\P)=24$ in a
comparable ball), which measures, inside one group, the distance between
``some cancellation'' (non-UP) and ``total cancellation'' (a unit).

\subsection*{Relation to prior work} The experimental landscape of units in
$\F_2[\P]$ is due to Dietrich--Lee--Nies--Vinyals \cite{DLNV}, and several
of the ground facts below were first observed there: no nontrivial unit has
both supports in the radius-$3$ ball, exactly $36$ units have both supports
in the radius-$4$ ball (enumerated by \textsf{TabularAllSat} and displayed
in their Table~3, with the orbit structure under an order-$8$ automorphism
group determined), swap units are counted at radii $5$ and $6$, and
Gardam's unit is noted to be supported at radius $5$
\cite[Rem.~1.3]{DLNV}. They also search $H_4$ (no unit with both supports
in the radius-$4$ ball of the polycyclic generators $a,b,r$) and propose
$H_4$ as the first candidate for a group failing the unique product
property whose group ring might still have only trivial units --- the
question our Section~\ref{sec:zoo} data bears on. What the present paper
adds to their picture is certification, minimality, theory and breadth:
the non-existence facts are re-derived independently (different group
model, different encoding, different solvers) and upgraded to
DRAT-certified statements; the minimality threshold $\msupp(\P;4)=42$ is
formulated and its lower bound certified by a $90$-way sweep; the effective
localization principle of Section~\ref{sec:decide} makes the global minimal
support question decidable rather than merely searchable; and the sweep is
extended to the Nielsen--Soelberg groups and to twisted-unitary refutations
in $H_4$. Earlier, Gardam found the first unit by a SAT search with a
twisted-unitary ansatz \cite{GardamNotes,GardamSAT},
which Bartholdi \cite{Bartholdi} explained structurally. Craven--Pappas
\cite[Thms.~10.4--10.6 and 11.2]{CravenPappas} proved the unit conjecture for $\P$ for
elements of dihedral-quotient length $\le3$, over any field and with no
restriction on the support of the inverse; since the dihedral quotient maps
generators to generators, the word ball $\Ball3$ lies inside their
length-$\le3$ stratum, so the negative half of Theorem~\ref{thm:ru} is a
special case of their theorem, and our DRAT certificate is an independent,
machine-checkable verification of that case. We stress that the certificate
is strictly weaker than the theorem it certifies: Craven--Pappas constrain
$\supp\sigma$ alone, whereas our encoding constrains $\supp\sigma$ and
$\supp\sigma^{-1}$ simultaneously. The two stratifications
diverge from radius $4$ onward: $\Ball4$ contains length-$4$ elements, and
Gardam's unit has length $4$ in their sense after a conjugation
\cite[\S2]{Gardam}. Small-support exclusions valid in \emph{all}
torsion-free groups are surveyed in \cite[\S2]{Gardam}.

Bounding the support of a unit in terms of the support of its inverse has an
older history that we should not claim to open. Craven--Pappas devote
\cite[\S14]{CravenPappas} to \emph{property~(U)} --- for every finite
$X\subseteq G$ there is a finite $Y(X)\subseteq G$ with
$\supp\sigma\subseteq X\Rightarrow\supp\sigma^{-1}\subseteq Y(X)$ --- and,
citing Friedman--Gustavson--Pappas \cite{FGP1995}, record that $K[G]$ has
property~(U) whenever $G$ has an abelian subgroup of finite index; for the
fours group they prove in addition $L(\sigma)=L(\sigma^{-1})$
\cite[Thm.~14.3]{CravenPappas}. Since $\P$ is virtually abelian, $\F_2[\P]$
has property~(U) already by that route. Theorem~\ref{thm:decideu} is a
statement of a different shape --- it bounds a search \emph{radius} in terms
of the total support $n$, with no set $X$ given in advance, and yields
decidability of the minimal-support question --- but it belongs to the same
line of thought, and the reader should read it against \cite[\S14]{CravenPappas}
rather than as the first result of its kind. Craven--Pappas also prove that
no nontrivial unit of $K\Gamma$ has support inside Promislow's $14$-element
non-UP set \cite[Thm.~12.1]{CravenPappas}, the direct ancestor of the
non-UP/unit comparison of Section~\ref{sec:compare}.

On the computational side the closest neighbour is Garg--Mineyev
\cite{GargMineyev}, who exclude zero-divisor and unit counterexamples over
$\F_2$ in torsion-free CAT(0) groups for all support pairs $(m,n)$ with
$1\le m,n\le13$, and for $m\in\{6,7\}$ with $n\le200$. Their exclusions are
conditional on a structural hypothesis (an orientable product structure)
and are not restricted to a ball, whereas ours are unconditional and
ball-limited; the two therefore constrain different regions of the same
landscape, and neither implies the other (Shin \cite{Shin} has since
shown that the triple-girth product-structure route of Garg--Mineyev
produces neither zero-divisor nor unit counterexamples over $\F_2$ for any
support sizes $m,n\ge2$). The Garg--Mineyev bounds are, however, the
reason our size-split sweep is stated as a certified statement about
$\Ball4$ rather than as an unconditional support bound: outside the ball
their conditional exclusions are the only ones available.

\subsection*{Scope and epistemic status} Non-existence statements are about
stated balls in stated generators; nothing here bounds supports of units
globally except Theorem~\ref{thm:decideu}. Positive claims (units exhibited)
are verified by exact solver-free counting and are trustless. Negative claims
are CP-SAT infeasibility verdicts except where marked DRAT, where they carry
\textsf{drat-trim}-checked proofs; the ansatz-limited refutations of
Section~\ref{sec:zoo} are SAT-solver verdicts without proof certificates.
We state this division once here.

\section{The parity identity}\label{sec:parity}

Let $G$ be a group and $\alpha=\sum_{g\in A} g$, $\beta=\sum_{h\in B} h$
elements of $\F_2[G]$ with supports $A,B$ (all nonzero coefficients in
$\F_2$ are $1$). The coefficient of $w$ in $\alpha\beta$ is
$\#\{(g,h)\in A\times B: gh=w\} \bmod 2$.

\begin{proposition}\label{prop:parity}
Let $G$ be a torsion-free amenable group and $A,B\subseteq G$ finite and
nonempty. The following are equivalent:
\begin{enumerate}
\item there is a nontrivial unit $\alpha\in\F_2[G]$ with
$\supp\alpha=A$ and $\supp\alpha^{-1}=B$;
\item $|A|,|B|\ge2$, every $w\ne1$ has an even number of representations
$w=gh$ with $(g,h)\in A\times B$, and $1$ has an odd number.
\end{enumerate}
\end{proposition}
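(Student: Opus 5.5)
The plan is to prove the two implications separately. The forward direction is pure bookkeeping over $\F_2$; the converse carries the only real content, which is the direct finiteness of $\F_2[G]$ for amenable (indeed all) groups.

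For (1)$\Rightarrow$(2): set $\beta=\alpha^{-1}$. Since every nonzero coefficient in $\F_2$ equals $1$, we have $\alpha=\sum_{g\in A}g$ and $\beta=\sum_{h\in B}h$. By the coefficient formula recorded just before the proposition, the coefficient of $w$ in $\alpha\beta=1$ is $\#\{(g,h)\in A\times B: gh=w\}\bmod 2$. This count is odd for $w=1$ and even for $w\neq1$, which gives the parity part of (2). For the size conditions, note that if $|A|=1$ then $\alpha=g$ is a trivial unit, contradicting nontriviality. If instead $|B|=1$, then $\alpha^{-1}=h$ forces $\alpha=h^{-1}$, again trivial. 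Hence $|A|,|B|\ge2$. This direction does not use amenability or torsion-freeness.

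For (2)$\Rightarrow$(1): put $\alpha=\sum_{g\in A}g$ and $\beta=\sum_{h\in B}h$. The parity hypotheses say exactly that $\alpha\beta=1$, by the same coefficient computation. To conclude that $\alpha$ is a unit with inverse $\beta$, we must also show $\beta\alpha=1$. Here I will invoke Kaplansky's direct finiteness theorem: for any field $K$ of any characteristic and any group $G$, $xy=1$ implies $yx=1$ in $K[G]$. This is the step that needs an outside result. Since the proposition assumes amenability, I would instead cite the elementary route for sofic groups (Elek--Szab\'o), which covers amenable groups. Its idea is to approximate $G$ by finite Følner pieces, so that left multiplication by $\alpha$ and by $\beta$ become nearly-defined operators on finite $\F_2$-vector spaces. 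A rank argument then shows that a one-sided inverse is two-sided. I expect this to be the main obstacle only in the sense of choosing which citation to give; no new computation is needed.

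With $\beta\alpha=1$ in hand, we get $\alpha^{-1}=\beta$, so $\supp\alpha=A$ and $\supp\alpha^{-1}=B$. Finally, $\alpha$ is nontrivial because $|A|\ge2$ means $\alpha$ is not a single group element, and over $\F_2$ the trivial units are exactly the group elements. Torsion-freeness plays no logical role in either direction. It is the ambient hypothesis of the paper, under which the Kaplansky conjecture would predict that such pairs never exist. I would remark on this rather than use it.
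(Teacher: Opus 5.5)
Your proof is correct and is the paper's argument: read the parity conditions as $\alpha\beta=1$ coefficientwise, then upgrade to $\beta\alpha=1$ via direct finiteness of $\F_2[G]$ for sofic (in particular amenable) groups, citing Elek--Szab\'o. One correction to your framing: Kaplansky's direct finiteness theorem is a characteristic-zero result, and for $\F_2[G]$ with $G$ arbitrary, direct finiteness is Kaplansky's still-open direct finiteness conjecture; so your ``(indeed all) groups'' is unsupported, and the amenability hypothesis is doing genuine work rather than merely fixing a choice of citation.
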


\begin{proof}
$(1)\Rightarrow(2)$: the parity statement is $\alpha\beta=1$ read off
coefficientwise; if $|A|=1$ then $\alpha$ is a trivial unit, and its inverse
is a trivial unit, forcing $|B|=1$.
$(2)\Rightarrow(1)$: the parity statement says $\alpha\beta=1$, i.e.\
$\beta$ is a right inverse. Amenable groups are sofic, so $\F_2[G]$ is
directly finite \cite{ElekSzabo}: $\alpha\beta=1$ implies $\beta\alpha=1$.
Since $|A|\ge2$, $\alpha$ is not a trivial unit.
\end{proof}

Two sound reductions used throughout: the augmentation map
$\F_2[G]\to\F_2$ sends a unit to $1$, so $|A|$ and $|B|$ are odd; and
supports of size $1$ are exactly the trivial units, so nontriviality is
$|A|,|B|\ge3$. The $\Ball2$ and $\Ball3$ certificates of
Theorem~\ref{thm:ru} are proved in the stronger unrestricted form
$|A|,|B|\ge2$ with no parity-of-size constraints; the $\Ball4$ minimality
sweep of Theorem~\ref{thm:mu} instead uses these cuts, as stated there.

\begin{remark}
Condition (2) without the clause at $1$ --- every element of $AB$ having
$\ge2$ representations --- is precisely the two-sided non-UP condition of
\cite{companion}. A unit support pair is a two-sided pair that fails
uniqueness \emph{everywhere except possibly at the identity}, with the parity
sharpened from ``$\ne1$'' to ``even''. The machinery, and the comparison in
Section~\ref{sec:compare}, both live on this kinship.
\end{remark}

\section{Methods}\label{sec:methods}

The groups are computed in the exact models of \cite{companion,sequel}: $\P$
as integral affine maps, $H_4=F(3,4)$ inside
$\mathrm{Heis}(\Z)\rtimes\langle\tau\rangle$, and $G_1,G_2,G_3$ by certified
coset--normal-form models. (For $\P$, the dictionary between the affine
model and the $x,y,z$ and coset-label conventions of \cite{Gardam} is
stated, and machine-checked, at the start of
Section~\ref{sec:decide}; all statements about $\P$ can be read in either
language.) Balls $\Ball{r}$ are generated by breadth-first
search; all products are exact.

For a radius $r$ we encode Proposition~\ref{prop:parity}(2) with selection
Booleans over $\Ball{r}$ for $A$ and $B$, reified pair variables for the
products, and, for each product value, a parity constraint (an equality
$c = 2t$ or $c = 2t+1$ over the pair count $c$). Minimization of $|A|+|B|$
or a cardinality cap turns existence into optimality or decision instances.
Every unit found is re-verified by direct counting over $A\times B$,
independently of the solver.

For DRAT certification we re-encode the model into CNF: pair variables by
$3$ clauses each, each parity constraint as a linear XOR chain ($4$ clauses
per link), and cardinality constraints by sequential counters; a
DRAT-producing solver (\textsf{glucose} or \textsf{kissat}) emits an
unsatisfiability proof which the independent checker \textsf{drat-trim}
verifies. Each certified statement below states exactly which side
constraints its instance carries (none beyond $|A|,|B|\ge2$ for the
$\Ball3$ theorem; the sound augmentation cuts where noted). The certified
statements therefore do not rest on the CP-SAT solver.

\section{The Promislow group}\label{sec:P}

Fix the standard generators $a,b$ of
$\P=\langle a,b\mid b^{-1}a^2b=a^{-2},\ a^{-1}b^2a=b^{-2}\rangle$, with
$|\Ball2|=17$, $|\Ball3|=41$, $|\Ball4|=83$, $|\Ball5|=147$.

\begin{theorem}[localization]\label{thm:ru}
$\ru(\P)=4$. More precisely:
\begin{enumerate}
\item no pair $A,B\subseteq\Ball3$ with $|A|,|B|\ge2$ satisfies the unit
parity condition --- certified by a DRAT proof ($3361$ variables, $11288$
clauses, $68789$ proof lines, checked by \textsf{drat-trim}); the same holds
in $\Ball2$;
\item $\Ball4$ contains the support pair of an explicit nontrivial unit
with $|A|=|B|=21$, re-verified by direct counting.
\end{enumerate}
\end{theorem}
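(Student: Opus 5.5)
The plan is to reduce both halves of the statement to finite combinatorics via Proposition~\ref{prop:parity}. This applies because $\P$ is torsion-free and virtually $\Z^3$, hence amenable. Then a nontrivial unit with both supports in $\Ball{r}$ exists if and only if some pair $A,B\subseteq\Ball{r}$ with $|A|,|B|\ge2$ satisfies the parity condition: $1$ has an odd number of representations $gh$ with $(g,h)\in A\times B$, and every $w\ne1$ has an even number. So $\ru(\P)=4$ is equivalent to two claims: infeasibility of this finite condition on $\Ball3$ (and hence on $\Ball2\subseteq\Ball3$), and feasibility on $\Ball4$.

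First, I build the exact model of $\P$ as integral affine maps of $\Z^3$, the Hantzsche--Wendt representation with $a,b$ acting as screw motions. I check the two defining relations in this model and compute $\Ball r$ by breadth-first search on $a^{\pm1},b^{\pm1}$, confirming the sizes $17,41,83$. Faithfulness is the point that needs care. I would argue it through the standard presentation: the map from $\P$ is injective because the image is the torsion-free Bieberbach group with the same presentation. Since only products inside the finite set $\Ball3\cdot\Ball3\subseteq\Ball6$ are used, equality of elements can be decided exactly by comparing the affine matrices.

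For (1), I form the multiplication table $\Ball3\times\Ball3\to\Ball6$ and introduce Booleans $x_g,y_h$ for membership in $A$ and $B$. For each pair I add a product variable $p_{g,h}\leftrightarrow x_g\wedge y_h$, using three clauses. For each $w\in\Ball6$ I impose the parity constraint $\bigoplus_{gh=w}p_{g,h}=[w=1]$, encoded as an XOR chain with auxiliary variables and four clauses per link. I add the cardinality constraints $\sum x_g\ge2$ and $\sum y_h\ge2$ by sequential counters. I then run a DRAT-producing solver and check the refutation with \textsf{drat-trim}; the statement's variable and clause counts are just bookkeeping on this encoding. The $\Ball2$ claim follows by restriction, or by an identical smaller instance.

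For (2), I first search the $\Ball4$ instance for a satisfying assignment, optionally minimising $|A|+|B|$ or restricting to odd sizes using the augmentation cut. This yields a pair with $|A|=|B|=21$, which I then verify independently of the solver by counting all $441$ products and checking the parity at each value. By Proposition~\ref{prop:parity}, this pair is the support pair of a nontrivial unit, so $\ru(\P)\le4$; combined with (1), this gives $\ru(\P)=4$.

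The main obstacle is (1): the XOR-heavy instance is hard for CDCL solvers, and a proof trace can grow large. To keep it manageable, I would first try symmetry breaking that is sound for the problem. Replacing $(A,B)$ by $(Ag^{-1},gB)$ preserves every product, so I can fix $1\in A$ when translation stays inside the ball. This step must be justified carefully or skipped, because translation can leave the ball. Failing that, I would simply rely on kissat with its XOR-friendly preprocessing.
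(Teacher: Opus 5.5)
Your proposal is correct and follows essentially the same route as the paper: the reduction to the parity condition via Proposition~\ref{prop:parity}, the exact affine model with breadth-first balls, the CNF encoding (three clauses per pair variable, XOR chains with four clauses per link, sequential counters for $|A|,|B|\ge2$) refuted by a DRAT-producing solver and checked by \textsf{drat-trim}, and an unrestricted search on $\Ball4$ whose $(21,21)$ witness is re-verified by direct counting. Drop the translation-based symmetry breaking, which, as you suspect, is not sound for a ball-limited statement; the certified instance carries no side constraints beyond $|A|,|B|\ge2$, and the stated variable and clause counts refer to that instance.
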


Part (1) is a special case of the Craven--Pappas theorem
\cite[Thms.~10.4--10.6 and 11.2]{CravenPappas}, which excludes nontrivial units of
dihedral-quotient length $\le3$ over any field with no restriction on the
inverse; both parts were also observed computationally in \cite[\S3.4]{DLNV}.
The content of Theorem~\ref{thm:ru} is thus the independent re-derivation
(different group model, encoding and solvers) and the machine-checkable
certificate. Our positive half
was found by unrestricted CP-SAT search (no unitary ansatz); the witness is
included in the ancillary files, and by the completeness of the
\textsf{TabularAllSat} census of \cite{DLNV} it is one of their $36$ units
of Table~3, whose orbit structure under an order-$8$ automorphism group
they have determined. As a mutual verification of that census, our pipeline
re-enumerated all units with both supports in $\Ball4$ by solve-and-block
(affine model, parity encoding, \textsf{CryptoMiniSat} with native XOR
reasoning): it finds exactly $36$, all with support sizes $(21,21)$, each
re-verified by direct counting, and the terminal instance is unsatisfiable
--- in exact agreement with \cite[\S3.4--3.5]{DLNV} through entirely
disjoint toolchains. (Neither enumeration is certificate-backed; they now
corroborate each other.) Two remarks on how the picture sits relative to
Gardam's unit $\alpha_G$ \cite[Thm.~A]{Gardam}.

First, we transcribed $\alpha_G$ into the affine model and measured it: its
$21$ support elements have word-radius profile
$\{0{:}1,\,1{:}1,\,2{:}5,\,3{:}4,\,4{:}7,\,5{:}3\}$, so
$\supp\alpha_G\subseteq\Ball5$ and not $\Ball4$: \emph{Gardam's unit does
not sit in a ball-minimal position}. (As a validation of the transcription
and of the machinery, our solver, given $\supp\alpha_G$ as a fixed left
support, recovered its inverse, with support of size $21$ --- the inverse of
a unit is unique, so the minimization merely reconfirms its support size
independently --- whose parity condition we re-verified by counting;
this re-proves Gardam's theorem end-to-end by an independent route.)

Second, before we became aware of the census we checked directly that our
$\Ball4$ unit is not of the form $g\,\alpha_G^{\pm1}\,h$ with
$g,h\in\Ball7$, nor the $a{\leftrightarrow}b$ twist of such; this is
consistent with, and strictly weaker than, the orbit analysis of
\cite[\S3.4]{DLNV}, which partitions the $36$ units into five orbits and in
particular identifies three essentially distinct units. We record our sweep
only as an independent spot-check of their census.

\begin{theorem}[ball-limited minimality]\label{thm:mu}
$\msupp(\P;4)=42$: within $\Ball4$, every nontrivial unit of $\F_2[\P]$ has
$|\supp\alpha|+|\supp\alpha^{-1}|\ge42$, attained by the $(21,21)$ unit of
Theorem~\ref{thm:ru}.
\end{theorem}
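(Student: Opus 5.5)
The upper bound $\msupp(\P;4)\le42$ is immediate from Theorem~\ref{thm:ru}(2). That result exhibits a nontrivial unit with $|A|=|B|=21$ and both supports in $\Ball4$, and its parity condition has already been re-verified by direct counting over $A\times B$. By Proposition~\ref{prop:parity}, applicable because $\P$ is torsion-free and virtually abelian, hence amenable, this pair is a genuine unit. All the work is therefore in the lower bound: no pair $A,B\subseteq\Ball4$ satisfying Proposition~\ref{prop:parity}(2) with $|A|,|B|\ge2$ has $|A|+|B|\le40$.

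I would first cut the search space using the sound reductions recorded after Proposition~\ref{prop:parity}. The augmentation map forces $|A|$ and $|B|$ to be odd. Nontriviality forces $|A|,|B|\ge3$. A total support below $42$ is then at most $40$, and $|A|+|B|$ is even automatically.

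I would then split by the exact sizes. For each ordered pair $(s,t)$ of odd integers with $s,t\ge3$ and $s+t\le40$, I would build a CNF instance over $\Ball4$ (83 elements) following Section~\ref{sec:methods}:
\begin{itemize}
\item selection variables for $A$ and $B$;
\item pair variables for each product $gh$ with $g,h\in\Ball4$;
\item an XOR chain per product value $w\in\Ball4\cdot\Ball4$, with odd parity at $w=1$ and even parity elsewhere;
\item sequential-counter encodings of $|A|=s$ and $|B|=t$.
\end{itemize}
The pairs $(s,t)$ are counted as follows. Writing $s=2i+1$ and $t=2j+1$ gives $i,j\ge1$ and $i+j\le19$. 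This yields $\sum_{i=1}^{18}(19-i)=171$ ordered pairs, or fewer if one exploits the $A\leftrightarrow B$ symmetry under $\alpha\mapsto\alpha^{-1}$ together with the inversion anti-automorphism $g\mapsto g^{-1}$ of $\P$, which preserves $\Ball4$. That symmetry maps a pair $(A,B)$ to $(B^{-1},A^{-1})$, so it suffices to treat $s\le t$, i.e.\ $i\le j$, giving exactly $90$ cases. This matches the announced $90$-way sweep. Each case is handed to \textsf{kissat} or \textsf{glucose} with DRAT output and checked by \textsf{drat-trim}. The union of the $90$ unsatisfiability verdicts, together with the symmetry reduction and the parity cuts, proves that every nontrivial unit with both supports in $\Ball4$ has total support at least $42$.

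I expect the main obstacle to be computational: the larger-size cases near $s+t=40$, such as $(19,21)$ and $(17,23)$. Their cardinality counters interact poorly with long XOR chains, and the proofs may become very large. My first remedy would be the size split itself, since fixing exact cardinalities prunes the search much better than a single $\le40$ cap. If a case still resists, I would add symmetry breaking by left and right translation. If $(A,B)$ is a unit pair, so is $(gA,Bg^{-1})$, and one may normalise so that $1\in A$. This is only sound if the translated pair stays inside $\Ball4$, however. So the safer remedy would be to further split a hard case on whether a fixed element lies in $A$, keeping every sub-instance certificate-backed.
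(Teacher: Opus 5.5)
Your proposal is correct and is essentially the paper's proof. The upper bound comes from the explicit $(21,21)$ unit of Theorem~\ref{thm:ru}, and the lower bound from a $90$-way DRAT-certified size split over odd $a\le b$ with $a,b\ge3$ and $a+b\le40$; the only difference is that you justify restricting to $a\le b$ via $(A,B)\mapsto(B^{-1},A^{-1})$ (the inversion anti-automorphism, which preserves the parity condition directly and maps $\Ball4$ to itself), whereas the paper uses the swap $(A,B)\mapsto(B,A)$ justified by direct finiteness, and either suffices.
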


The lower bound is DRAT-certified, by size splitting: if $(A,B)$ is a unit
support pair then so is $(B,A)$ (direct finiteness), so it suffices to
refute each split $(|A|,|B|)=(a,b)$ with $a\le b$, both odd and $\ge3$ (the
sound augmentation cuts of Section~\ref{sec:parity}) and $a+b\le40$. All
$90$ such instances are unsatisfiable, each carrying a DRAT proof ---
produced by \textsf{glucose} or \textsf{kissat} --- checked by
\textsf{drat-trim} (proofs up to $1.3$\,GB, verified then deleted;
regenerable by the ancillary scripts).
The statement certified is thus exactly: no pair of subsets of $\Ball4$ of
odd sizes $\ge3$ with $|A|\le|B|$ and total $\le40$ satisfies the unit
parity condition; the unordered statement follows by the swap above.
The census of \cite{DLNV} sharpens the picture beyond the total: all $36$
units of their Table~3 have support of size exactly $21$
\cite[\S3.5]{DLNV}, so within $\Ball4$ every unit realizes the split
$(21,21)$; Theorem~\ref{thm:mu} is the certificate-backed floor under that
(uncertified) enumeration. Within $\Ball4$ this confirms Gardam's
expectation \cite[\S2]{Gardam} --- in his per-support form, by their
census; in certified total-support form, by our sweep.

\section{The relatives: \texorpdfstring{$H_4$, $G_1$, $G_2$, $G_3$}{H4, G1, G2, G3}}\label{sec:zoo}

The same search applies verbatim to any group of the catalogue --- all
amenable, so Proposition~\ref{prop:parity} is available.

\begin{theorem}\label{thm:zoo}
There is no nontrivial unit of $\F_2[G]$ with both supports in $\Ball{r}$
for: $G=H_4$ and $r\le3$ (pools $9$, $39$, $119$); $G=G_1$ and $r\le3$
($5$, $17$, $53$); $G=G_2$ and $r=1$ ($17$; eight generators); $G=G_3$ and
$r\le3$ ($5$, $17$, $53$).
\end{theorem}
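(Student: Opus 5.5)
The plan is to reduce Theorem~\ref{thm:zoo} to finitely many finite combinatorial checks via Proposition~\ref{prop:parity}, and then discharge each check with the pipeline of Section~\ref{sec:methods}. All four groups $H_4,G_1,G_2,G_3$ are torsion-free and elementary amenable. By Proposition~\ref{prop:parity}, a nontrivial unit of $\F_2[G]$ with both supports in $\Ball{r}$ therefore exists iff some pair $A,B\subseteq\Ball{r}$ with $|A|,|B|\ge2$ satisfies the parity condition: even representation counts at every $w\ne1$ and an odd count at $1$. Since a unit pair in $\Ball{r'}$ with $r'\le r$ is also a pair in $\Ball{r}$, it suffices to refute the largest radius listed for each group: $r=3$ for $H_4$, $G_1$, $G_3$, and $r=1$ for $G_2$. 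The smaller balls are kept only as sanity checks.

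First, for each group I build the ball by breadth-first search in the exact model of Section~\ref{sec:methods}. These are the Heisenberg-by-$\tau$ model for $H_4$ and the certified coset--normal-form models for $G_1,G_2,G_3$. I check the pool sizes against the stated $9,39,119$; $5,17,53$; $17$; $5,17,53$. This step matters because the correctness of the whole statement rests on the model computing products exactly. For the $G_i$ I would cross-check the normal forms by verifying the defining relators and, for $G_3$, the identification with Gardam's amalgam $S$ from Lemma~\ref{lem:g3isS}, on the ball.

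Second, I compute the full product table $\Ball{r}\times\Ball{r}\to G$, indexing the product values. Then I write the parity encoding: selection Booleans $x_g$ and $y_h$, reified pair variables $p_{g,h}=x_g\wedge y_h$, an XOR constraint $\bigoplus_{gh=w}p_{g,h}=[w=1]$ for each value $w$, and cardinality constraints $\sum x_g\ge2$, $\sum y_h\ge2$. I may add the sound augmentation cuts (odd sizes, $\ge3$) to shrink the search, since nontrivial units necessarily satisfy them. A CP-SAT infeasibility verdict gives the statement. Where feasible, I would also re-encode to CNF with XOR chains and sequential counters and produce a \textsf{drat-trim}-checked proof, as for Theorem~\ref{thm:ru}.

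The main obstacle is the size of the $H_4$ radius-$3$ instance. It has $119$ elements, so roughly $14{,}000$ pair variables, and the product values may be many and sparsely collided. XOR-heavy instances of this kind can be hard for CDCL solvers. I would first try exploiting symmetry: the swap $(A,B)\mapsto(B,A)$ (by direct finiteness), and translation normalization. Left-translating a unit by $g^{-1}$ keeps it a unit, so one could assume $1\in A$, though only when this preserves ball membership. The safer option is a native-XOR solver such as \textsf{CryptoMiniSat}, possibly combined with a size split as in Theorem~\ref{thm:mu}. For $G_2$, with eight generators, the pool grows fast, which is why only $r=1$ is claimed; that instance is small and routine.
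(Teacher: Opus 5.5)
Your plan is correct and is essentially the paper's argument: reduce via Proposition~\ref{prop:parity} to the parity encoding over each ball, then refute it by SAT. Only the easy direction (1)$\Rightarrow$(2) is actually needed for non-existence, so amenability and torsion-freeness are surplus here. The differences are of emphasis: the paper makes the \textsf{kissat}\,+\,\textsf{drat-trim} certificate primary for all ten instances, in the unrestricted $|A|,|B|\ge2$ form without augmentation cuts, and uses CP-SAT only as an agreeing cross-check (the $H_4$, $\Ball{3}$ instance falls in $400$\,s, so no symmetry breaking or size split is needed), whereas your monotonicity remark correctly shows that one refutation at the largest radius per group would already suffice.
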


All ten statements are DRAT-certified, in the unrestricted form that no
pair of subsets of sizes $\ge2$ satisfies the parity condition
(\textsf{kissat} + \textsf{drat-trim}; the largest instance, $H_4$ at
$\Ball3$, in $400$ s with a $351$ MB proof, verified and then deleted,
regenerable by the included scripts); independent CP-SAT verdicts agree
($H_4$, $\Ball3$: $1081$ s). For $H_4$ this
complements the search of \cite[\S6]{DLNV}, who showed (also by SAT, in the
polycyclic generators $a,b,r$ --- a different word metric) that no
nontrivial unit has both supports in their radius-$4$ ball, and who propose
$H_4$ as the first candidate for a group failing the unique product
property whose group ring $\F_2[H_4]$ might nonetheless have only trivial
units. Whether $\F_2[H_4]$ has a unit in our $\Ball4$ ($283$ elements) is
open --- the unrestricted instance is beyond our present budget (unresolved
at two hours) --- and the stakes are exactly their proposal: $H_4$ resists
to the same depth (radius $3$ in the $x_i$ metric) at which $\P$ still
resists, and $\P$ yields at radius $4$; a unit in $\F_2[H_4]$ would refute
the proposal and add a further counterexample group to the two known over
$\F_2$ ($\P$, and Gardam's amalgam $S$ \cite[Thm.~B]{GardamC}).

\subsection*{\texorpdfstring{$G_3$}{G3} is Gardam's amalgam, and
\texorpdfstring{$\ru(G_3)=4$}{ru(G3)=4}}

The $G_3$ line of Theorem~\ref{thm:zoo} is not merely a non-existence
statement: it is sharp, because the second known counterexample group over
$\F_2$ is $G_3$ itself.

\begin{lemma}\label{lem:g3isS}
The universal Nielsen--Soelberg group $G_3$ coincides with the group
$S=\langle x,y\mid (xy)^2(xy^{-1})^2,\ (yx)^2(yx^{-1})^2\rangle$ of
\cite[\S4]{GardamC}.
\end{lemma}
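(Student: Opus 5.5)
The plan is to prove Lemma~\ref{lem:g3isS} by an explicit isomorphism of presentations, checked in both directions by Tietze transformations, with a machine check in the exact model as corroboration. I would begin from the defining presentation of the universal Nielsen--Soelberg group $G_3$ as fixed in \cite{companion,sequel}. It has two generators, consistent with the ball sizes $5,17,53$ of Theorem~\ref{thm:zoo}, and relators expressing that certain words square-invert one another.

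The first step is to recognise the structure of $S$ directly from its relators. Put $u=xy$ and $v=xy^{-1}$. The first relator $(xy)^2(xy^{-1})^2$ then reads $u^2=v^{-2}$, and $\langle u,v\mid u^2=v^{-2}\rangle$ is a presentation of the Klein bottle group. Similarly, with $u'=yx$ and $v'=yx^{-1}$, the second relator reads $u'^2=v'^{-2}$. This is the amalgam description of \cite[\S4]{GardamC}: two Klein bottle groups glued over a $\Z^2$. In the Klein bottle group $\langle u,v\rangle$ the subgroup $\langle u^2,\,uv\rangle$ is a $\Z^2$, and in these coordinates $uv=xyxy^{-1}$. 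I would record which words span the common $\Z^2$ on each side, since that is the data to be matched.

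The second step is to rewrite the Nielsen--Soelberg relators of $G_3$ in terms of $x,y$ (or of the words $u,v,u',v'$) and exhibit a substitution $\phi$ from the generators of $G_3$ to words in $x^{\pm1},y^{\pm1}$, together with an inverse substitution $\psi$, such that the following hold:
\begin{enumerate}
\item $\phi$ carries each relator of $G_3$ into the normal closure of the relators of $S$;
\item $\psi$ carries each relator of $S$ into the normal closure of the relators of $G_3$;
\item $\psi\phi$ and $\phi\psi$ fix the generators modulo the relations.
\end{enumerate}
I expect the identity map $x\mapsto x$, $y\mapsto y$, possibly composed with an automorphism such as $y\mapsto y^{-1}$ or $x\leftrightarrow y$, to suffice. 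In that case the content of the lemma is that the two relator sets are conjugates of one another, or of one another's inverses, and each verification is a short free-group computation.

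As an independent check, I would use the certified coset--normal-form model of $G_3$ from Section~\ref{sec:methods}. There I would verify by exact computation that $(xy)^2(xy^{-1})^2$ and $(yx)^2(yx^{-1})^2$ evaluate to the identity, which gives a surjection $S\to G_3$. I would then confirm that the ball sizes of $S$ agree with those of $G_3$. That comparison is corroborating evidence only; the proof of the lemma is the pair of inverse substitutions.

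The main obstacle is bookkeeping of conventions rather than mathematics. The Nielsen--Soelberg relators may be stated with the opposite orientation, or with the roles of the two generators exchanged, relative to Gardam's. If the naive identification fails, my first move is to try the four sign and swap automorphisms of $F(x,y)$, and to compare normal forms of the relators up to cyclic permutation and inversion. Failing that, I would match the amalgam data directly: identify both groups as $K\ast_{\Z^2}K$ and check that the two gluing matrices in $\mathrm{GL}_2(\Z)$ agree up to the automorphisms of the Klein bottle group preserving the edge subgroup.
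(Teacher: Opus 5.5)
Your outline matches the paper's: the identity on generators, the model evaluation of Gardam's two relators giving $S\twoheadrightarrow G_3$, and a Tietze comparison of relator sets for the converse. The gap is that the comparison, which is the substantive content of the lemma, is never carried out, and the mechanism you predict for it does not hold.

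The paper takes the relators of $G_3$ from \cite[\S4]{NS}: $R_1=(yx)^2(xy)^2$ and $R_2=(xy^{-1})^2(xy)^2$. $R_2$ is a cyclic permutation of Gardam's first relator $s_1=(xy)^2(xy^{-1})^2$. But $R_1$ is a positive, cyclically reduced word. By contrast, every image of $s_1$ or $s_2=(yx)^2(yx^{-1})^2$ under a signed permutation of $\{x,y\}$ is cyclically reduced and contains a generator with both exponent signs. Cyclically reduced conjugates are cyclic permutations, so $R_1$ is conjugate to none of these words or their inverses. Your test ("relator sets agree up to conjugation and inversion, after a sign/swap automorphism") therefore fails. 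You would then fall through to the gluing-matrix comparison, which you leave unspecified and which would also need an independent amalgam splitting of $G_3$.

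The missing idea is a Tietze move in which one relator rewrites the other. In the free group $(yx^{-1})^2=\bigl((xy^{-1})^2\bigr)^{-1}$, hence
\[
s_2R_2=(yx)^2\,(yx^{-1})^2(xy^{-1})^2\,(xy)^2=(yx)^2(xy)^2=R_1,
\qquad R_2=(xy)^{-2}s_1(xy)^2 .
\]
So $\langle\!\langle R_1,R_2\rangle\!\rangle=\langle\!\langle s_1,s_2\rangle\!\rangle$ in $F(x,y)$, and $x\mapsto x$, $y\mapsto y$ is an isomorphism. This is exactly the paper's step of substituting the consequence $(yx^{-1})^2=(xy)^2$ of $R_2$ into $R_1$.

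Your conditions (1)--(3) are phrased via normal closures, so they would accommodate this. The plan just has to state the $G_3$ relators and exhibit this identity, rather than expect termwise conjugacy. Once the identity is in hand, the model evaluation and the ball-size comparison are corroboration only.
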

\begin{proof}
Both of Gardam's relators evaluate to the identity in our certified faithful
model of $G_3$ (verified exactly; see the ancillary log), so the assignment
$x\mapsto x$, $y\mapsto y$ defines a surjection $S\twoheadrightarrow G_3$.
Conversely, writing $R_1=(yx)^2(xy)^2$ and $R_2=(xy^{-1})^2(xy)^2$ for the
relators of \cite[\S4]{NS}: $R_2$ is a conjugate of the first relator of
$S$, and substituting the consequence $(yx^{-1})^2=(xy)^2$ of $R_2$ into
$R_1$ yields the second, so the two relator sets are equivalent and the
presentations agree. Gardam attributes the presentation of $S$ to
\cite[p.~23]{Soelberg} and \cite{NS}, consistent with this identification.
\end{proof}

Consequently the unit of \cite[Thm.~B]{GardamC} --- $\nu$ of support $29$
with $\nu^{-1}=\varphi(\nu)^*$ for the order-$4$ automorphism
$\varphi\colon x\mapsto y,\ y\mapsto x^{-1}$ --- lives in $\F_2[G_3]$. As
written its support reaches word radius $6$, but the unit condition is
invariant under the two-sided translation $(\alpha,\beta)\mapsto(g\alpha,\beta
g^{-1})$, and this can be exploited:

\begin{proposition}\label{prop:rug3}
$\F_2[G_3]$ has a nontrivial unit with both supports inside $\Ball4$, of
total support $58$. Combined with the $G_3$ case of Theorem~\ref{thm:zoo},
$\ru(G_3)=4$.
\end{proposition}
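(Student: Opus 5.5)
The unit will come from Gardam's: I transport $\nu$ of \cite[Thm.~B]{GardamC} into $\F_2[G_3]$ along Lemma~\ref{lem:g3isS}, then move it by a translation that preserves the unit condition until both supports fit in $\Ball4$.

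First I transcribe $\nu$ into the certified faithful model of $G_3$. Its support $A_0$ has $29$ elements, read off the words in $x,y$. I compute $\nu^{-1}=\varphi(\nu)^*$ directly. Its support is $B_0=\varphi(A_0)^{-1}$, again of size $29$, so the total support is $58$. I re-verify the parity condition of Proposition~\ref{prop:parity}(2) for $(A_0,B_0)$ by solver-free counting over $A_0\times B_0$. This step certifies that the transcription is correct.

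Next I search over translates. For $g\in G_3$ the pair $(gA_0,\,B_0g^{-1})$ is again a unit support pair, because $(g\nu)(\nu^{-1}g^{-1})=1$ and direct finiteness gives the other side. More generally, $(gA_0h,\,h^{-1}B_0g^{-1})$ is a unit support pair for any $g,h$. I would enumerate $g,h$ in a moderate ball, say $\Ball6$, since the supports as written reach radius $6$ and a recentring should not need to move farther than that. I would also allow precomposition with the automorphisms $\varphi^k$, which permute the generators up to inversion and hence preserve word length. For each candidate I check whether both translated supports lie in $\Ball4$. The natural first try is $g^{-1}$ equal to a word-metric centre of $A_0$, with $h$ chosen to recentre $B_0$. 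Once a hit $(A,B)$ is found, I re-verify it by direct counting, independently of the search, and confirm $|A|=|B|=29$. This proves the first sentence of the proposition.

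The second sentence then follows at once. Theorem~\ref{thm:zoo} gives no unit in $\Ball3$ of $G_3$, and the unit just found lies in $\Ball4$, so $\ru(G_3)=4$.

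The main obstacle is whether any two-sided translate actually fits in $\Ball4$, since $\Ball4$ of $G_3$ is small relative to a $29$-element support of radius $6$. If the translate search fails, I would fall back to an unrestricted parity search in $\Ball4$ as in Section~\ref{sec:methods}. A hit there would still give $\ru(G_3)=4$, but possibly with a total support other than $58$.
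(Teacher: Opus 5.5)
Your plan is essentially the paper's proof: transcribe Gardam's $\nu$ into the $G_3$ model, verify the pair's parity profile by counting, search translators in $\Ball6$ for a translate with both supports in $\Ball4$, re-verify it by counting, and invoke Theorem~\ref{thm:zoo} for minimality. The paper uses the pair $(\nu^*,\varphi(\nu))$ and only the one-sided family $(g\alpha,\beta g^{-1})$, and that already succeeds; inverting its hit gives your pair $(\nu,\varphi(\nu)^*)$ translated with $g'=1$, $h=g^{-1}$, so your two-sided search contains it, and your automorphism step is redundant because maps permuting the generators up to inversion preserve every ball.
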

\begin{proof}
Take $\alpha=\nu^*$ and $\beta=\varphi(\nu)$; exact counting in the model
gives $209$ distinct products, the identity occurring with multiplicity
$21$ and every other value with even multiplicity, so $\alpha\beta=1$ in
$\F_2[G_3]$. A search over translators $g\in\Ball6$ finds $g$ with
$g\cdot\operatorname{supp}\alpha$ and $\operatorname{supp}\beta\cdot g^{-1}$
both inside $\Ball4$ ($135$ elements); the translated pair is re-verified
from scratch by solver-free counting, with the same $209/21$ profile. Since
Theorem~\ref{thm:zoo} certifies that no nontrivial unit has both supports in
$\Ball3$ of $G_3$, the radius $4$ is least.
\end{proof}

So $\P$ is not the only group for which the unit localization radius is
known exactly, and $\ru(G_3)=\ru(\P)=4$ --- while the non-UP localization
radii of the two groups differ ($3$ for $\P$, $4$ for $G_3$). We had
initially overlooked \cite[\S4]{GardamC} when planning the sweep of this
section, and record the correction here: the $G_3$ row of
Table~\ref{tab:landscape} is a theorem, not a lower bound.

We pushed one level further with Gardam's own device, the twisted-unitary
ansatz: for a finite-order automorphism $\theta$, search for $\alpha$ with
$\alpha\,\theta(\alpha)^*=1$ (over $\F_2$ the character in the twist is
trivial, so $\theta$ is a group automorphism); this halves the variables and,
handed to an XOR-aware SAT solver (the parity constraints as native XOR
clauses with Gaussian elimination), it is dramatically effective --- it finds
a $\phi_1$-unitary unit of $\F_2[\P]$ of total support $42$ inside $\Ball5$
in two seconds, where $\phi_1\colon a\mapsto a,\ b\mapsto b^{-1}$ is the
twist identified by Bartholdi \cite{Bartholdi}, re-verified by solver-free
counting. The efficacy of the ansatz is not our observation: Bartholdi
\cite{Bartholdi} already reports that an unrestricted search over the
radius-$4$ ball in $\{a,b,ab\}^{\pm1}$ does not terminate in reasonable
time, whereas the $\theta$-unitary search over the same ball succeeds in
minutes; our contribution is only that the same device transfers to $H_4$ and
yields \emph{refutations} there. Applied to $H_4$, the same ansatz yields
only refutations. The cyclic presentation of
$F(3,4)$ has the dihedral group of order $8$ acting on the indices, and each
of its elements induces an automorphism of $H_4$: the rotations
$\sigma\colon x_i\mapsto x_{i+1}$ and $\sigma^2$, and the reflections
combined with inversion, $\tau_0\colon x_i\mapsto x_{4-i}^{-1}$ and
$\tau_1\colon x_i\mapsto x_{5-i}^{-1}$ (indices mod $4$; the reflections
reverse the relator $x_ix_{i+1}x_{i+2}x_{i+3}^{-1}$ and inversion restores
it because $-3\equiv1 \pmod 4$). That each letter map sends every relator
to $1$ is checked exactly in the model before any search, so each map is an
endomorphism, and since it has finite order on the letters it is an
automorphism. Two equivalences reduce the sweep to these four: a
$\theta$-unitary unit $\alpha$ gives the $\theta^{-1}$-unitary unit
$\theta(\alpha)$, and the $\psi\theta\psi^{-1}$-unitary unit $\psi(\alpha)$
for any automorphism $\psi$; when $\theta,\psi$ permute the generators up to
inversion, all three units have supports in the same ball. So
$\{\sigma,\sigma^2,\tau_0,\tau_1\}$ represents every nontrivial twist of
the dihedral symmetry. The result: \emph{no $\theta$-unitary unit with
support in $\Ball6$ ($1067$ elements) for any
$\theta\in\{\sigma,\sigma^2,\tau_0,\tau_1\}$}. The four instances have
$2{,}268{,}753$ variables and $7{,}938{,}361$ clauses each;
\textsf{glucose} (here on the plain CNF encoding of
Section~\ref{sec:methods}, parity chains rather than native XOR) refutes
$\sigma^2$, $\tau_0$ and $\tau_1$ in under a minute each and $\sigma$ in
$3887$ s. (The earlier radius-$5$ refutation for
$\sigma$, on $579$ elements, took $66$ s with the XOR-aware solver.) One side result calibrates the
twist's role: in $\P$ the companion twist $\phi_0\colon a\mapsto a^{-1},
b\mapsto b^{-1}$ yields nothing in $\Ball5$, so among the swept twists only
$\phi_1$ produces units --- consistent with Bartholdi's structure theorem.
The ansatz statements are, of course, ansatz-limited: they close only the
dihedral-twisted-unitary route through radius $6$ in $H_4$ (and the
$\phi_0$-twisted route through radius $5$ in $\P$), not existence in
general; twists by automorphisms of
$H_4$ outside this dihedral group, and the untwisted ($\theta=1$) unitary
ansatz, remain unswept.

\section{Comparison with the non-UP landscape}\label{sec:compare}

\begin{table}[ht]
\centering
\footnotesize
\setlength{\tabcolsep}{5pt}
\begin{tabular}{lccccc}
\toprule
 & $\P$ & $H_4$ & $G_1$ & $G_2$ & $G_3$ \\
\midrule
$m_1$ (least symmetric non-UP size)$^{(\ast)}$ & $14$ & $16$ & $8$ & $8$ & $15$ \\
$m_2$ (least two-sided non-UP total)$^{(\ast)}$ & $24$ & $22$ & $16$ & $16$ & $16$ \\
non-UP localization radius & $3$ & $3$ & $7$ & $1$ & $4$ \\
unit localization radius $\ru$ & $\mathbf{4}$ & $\ge4$ & $\ge4$ & $\ge2$ & $\mathbf{4}$ \\
least unit support total & $\mathbf{42}$ in $\Ball4$ & --- & --- & --- & $\le58$ \\
\bottomrule
\end{tabular}
\smallskip

{\small $(\ast)$: ball-limited values from \cite{companion,sequel}.}
\caption{Unit-support geometry against the non-UP landscape.}
\label{tab:landscape}
\end{table}

Table~\ref{tab:landscape} quantifies, inside $\P$, the cost of parity
rigidity: a two-sided non-UP pair exists with total $24$ at radius $3$,
while a unit pair needs total $42$ and radius $4$. The identity
(Proposition~\ref{prop:parity} and the remark after it) makes the comparison
meaningful: a unit support pair is a two-sided pair whose failure of
uniqueness is total and of even order away from $1$. In the catalogue the
non-UP thresholds are far below the unit thresholds everywhere, consistent
with the fact that non-UP groups have been known since 1987 while a unit
took until 2021 to find.

\section{An effective localization principle for units}\label{sec:decide}

The companion paper proves an effective finite-diameter principle for non-UP
sets of $\P$: if a non-UP $n$-set exists, one exists in an explicitly bounded
ball \cite[Thm.~6.2]{companion}. We now prove the unit analogue. The
mechanism is the same coordinate-decoupled integer linear algebra; the one
genuinely new point is that the unit condition, unlike bare non-uniqueness,
prescribes \emph{parities} of multiplicities, and we must check that the
re-realized small solution cannot break them.

\subsection*{Coordinates for this section} We use Gardam's coordinates for
$\P$ \cite[\S1]{Gardam}: $x=a^2$, $y=b^2$, $z=(ab)^2$ generate the maximal
abelian normal subgroup $T=\langle x,y,z\rangle\cong\Z^3$ of index $4$, and
every element of $\P$ is uniquely a product $t\cdot c$ with
$t=x^{i}y^{j}z^{k}\in T$ and a \emph{coset label} $c\in\{1,a,b,ab\}$; we
call $(i,j,k)\in\Z^3$ the \emph{exponent vector} of the element.
Conjugation by a label acts on $T$ by independent sign flips of the
exponents: $a$ fixes $x$ and inverts $y$ and $z$; $b$ fixes $y$ and inverts
$x$ and $z$; $ab$ fixes $z$ and inverts $x$ and $y$. (This dictionary, and
the uniqueness of the factorization, are the translation of the affine
model of Section~\ref{sec:methods} into the conventions of \cite{Gardam};
both were re-verified mechanically in the exact model.)

\begin{lemma}[parity coarsening]\label{lem:coarse}
Let $A$ and $B$ be finite index sets, and partition the set of cells
$A\times B$ into classes, one distinguished class $C_1$ of odd size, all
other classes of even size. Suppose group elements $a_i$ $(i\in A)$ and
$b_j$ $(j\in B)$ are assigned to the indices, injectively on each side, in
such a way that within every class all cells $(i,j)$ realize one and the
same product $a_ib_j$, and the cells of $C_1$ realize the product $1$.
Then, in the multiset of all $|A||B|$ products, the element $1$ occurs with
odd multiplicity and every other element with even multiplicity. This holds
regardless of any \emph{further} coincidences of products between distinct
classes.
\end{lemma}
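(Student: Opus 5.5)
The plan is a pure counting argument that never uses any group structure beyond the fact that each cell $(i,j)$ carries a well-defined product $a_ib_j$. For each group element $w$, let $N(w)=\#\{(i,j)\in A\times B : a_ib_j=w\}$ be its multiplicity in the multiset of all $|A||B|$ products. The hypothesis says that the product map $(i,j)\mapsto a_ib_j$ is constant on each class of the partition. Hence for every $w$ the fibre over $w$ is a union of whole classes. I will therefore write
\[
N(w)=\sum_{K\,:\,K\text{ realizes }w}|K|,
\]
where the sum runs over those classes $K$ whose common product is $w$. This decomposition is the only place the hypothesis enters. It is also exactly where the phrase ``regardless of further coincidences'' is absorbed: several distinct classes may share the same value $w$, and they simply all appear in the sum.

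The parities then follow by reducing the sum modulo $2$. For $w\neq1$, the class $C_1$ does not occur in the sum, because $C_1$ realizes $1\ne w$. Every class that does occur has even size, so $N(w)$ is even. For $w=1$, the sum contains $C_1$ exactly once, contributing an odd number, and possibly other classes that also realize $1$, each of even size. So $N(1)\equiv|C_1|\equiv1\pmod 2$.

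Injectivity of $i\mapsto a_i$ and $j\mapsto b_j$ is not needed for the parity computation itself. I would note that it is what lets us read the conclusion as Proposition~\ref{prop:parity}(2) for the sets $\{a_i\}$ and $\{b_j\}$. With injectivity, the multiset of cell products coincides with the multiset of products over $\{a_i\}\times\{b_j\}$, so the coefficient computation of Section~\ref{sec:parity} applies verbatim.

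There is no real obstacle here. The only point needing care is to state clearly that the fibres of the product map are unions of classes, so that cross-class coincidences can only add even contributions (or $|C_1|$ once) and never break parity.
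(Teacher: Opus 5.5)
Your proposal is correct and is essentially the paper's proof: fibres of the product map are disjoint unions of whole classes, so each multiplicity is a sum of class sizes, and reducing mod $2$ gives even for $w\neq1$ and odd (from $C_1$) for $w=1$. Your side remark is also right: injectivity plays no role in the parity count and matters only when the lemma is applied as Proposition~\ref{prop:parity}(2) in Step~4 of Theorem~\ref{thm:decideu}.
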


\begin{proof}
For an element $w$ let its \emph{fiber} be the set of cells realizing $w$.
Since cells in one class share their product, every fiber is a disjoint
union of full classes, and the parity of its size is the sum of the
parities of the classes it absorbs. All classes except $C_1$ have even
size. Hence a fiber not containing $C_1$ has even size, whichever classes
happen to merge into it. The fiber containing $C_1$ realizes the product
$1$, and its size has parity odd (from $C_1$) plus a sum of evens, hence
odd.
\end{proof}

\begin{theorem}[{effective localization for units of $\F_2[\P]$}]\label{thm:decideu}
For each $n$ there is an explicit constant
$D_{\mathrm u}(n)\le 4^{\,n}\,\mathrm{poly}(n)$ such that if $\F_2[\P]$
contains a nontrivial unit with $|\supp\alpha|+|\supp\alpha^{-1}|=n$, then it
contains one with both supports in $\Ball{D_{\mathrm u}(n)}$. Consequently
the existence of a nontrivial unit of $\F_2[\P]$ of total support $n$ is
decidable, and the least total support of a nontrivial unit of $\F_2[\P]$
is computable in principle.
\end{theorem}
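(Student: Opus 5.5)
We reduce the existence of a unit of total support $n$ to the solvability of a finite family of linear systems over $\Z^3$, one per combinatorial type, and use small-solution bounds to move to a bounded ball.

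Let $(A,B)$ be a unit support pair with $|A|+|B|=n$; by Proposition~\ref{prop:parity} this means $A,B$ satisfy the parity condition. Index $A$ and $B$ by abstract sets $I,J$ and write $a_i=t_i c_i$ and $b_j=s_j d_j$, with $t_i,s_j\in T\cong\Z^3$ and coset labels $c_i,d_j\in\{1,a,b,ab\}$. The \emph{type} of the pair records three pieces of data:
\begin{itemize}
\item the label maps $i\mapsto c_i$ and $j\mapsto d_j$;
\item the partition of the cells $I\times J$ into fibers of equal product;
\item the distinguished fiber realizing $1$.
\end{itemize}
The product is $a_ib_j=t_i\,(c_i s_j c_i^{-1})\,c_id_j$. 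Its label is $c_id_j$ reduced modulo $T$, and its exponent vector is $\vec t_i+\epsilon(c_i)\vec s_j+\vec e(c_i,d_j)$. Here $\epsilon(c)$ is the diagonal sign matrix from the dictionary above, and $\vec e(c_i,d_j)\in\Z^3$ is a fixed correction vector from the cocycle $c_id_j=\vec e\cdot(\text{label})$.

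The statement that two cells share a product is then a system of integer linear equations in the unknowns $\vec t_i,\vec s_j$. It decouples into three independent coordinate systems, since each $\epsilon$ is diagonal. Each coordinate system has at most $n$ unknowns, and its coefficients lie in $\{0,\pm1\}$ apart from bounded constants from $\vec e$. The condition that the fiber $C_1$ realizes $1$ is a further set of linear equations. Injectivity on each side is a finite set of disequations $a_i\ne a_{i'}$, needed only between elements with the same label.

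The parity coarsening Lemma~\ref{lem:coarse} lets us forget everything except these equalities. Take the classes to be the fibers of the original pair. Any new assignment satisfying the same equalities and the injectivity disequations is again a unit support pair, whatever accidental further coincidences of products occur. We may therefore keep only the equalities and drop the requirement that distinct fibers remain distinct, which is not a linear condition. We also need $|A|,|B|\ge2$, which is automatic because the index sets are unchanged.

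This leaves a system of linear equations plus finitely many disequations, with a known integer solution. We want a solution of small height. Solve each coordinate system by a Smith or Hermite normal form parametrization: the solution set is $x_0+L\Z^k$, with $x_0$ and the basis of $L$ bounded by Hadamard-type minors. Since the coefficients are in $\{0,\pm1\}$ and there are at most about $n$ unknowns per coordinate, these minors are at most $2^{O(n)}$, or $4^{n}\mathrm{poly}(n)$ with care.

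The disequations are the main obstacle. Each one excludes a proper affine sublattice of the parameter lattice; it is proper because the original solution satisfies it. There are at most $\binom{n}{2}$ of them, and the parameter space has dimension at most $n$. A point avoiding finitely many proper affine sublattices can be found in a box of side about the number of sublattices, by a pigeonhole or Combinatorial Nullstellensatz-style argument along a generic line. This gives parameters bounded by $\mathrm{poly}(n)$, and hence exponent vectors bounded by $4^n\mathrm{poly}(n)$.

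Next, the word length in $\P$ of $x^iy^jz^kc$ is $O(|i|+|j|+|k|)+O(1)$, which yields $D_{\mathrm u}(n)$. We should also normalize by a two-sided translation, for instance making some $a_i=1$, so that the translates stay inside the ball.

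Decidability follows: there are finitely many pairs of subsets of $\Ball{D_{\mathrm u}(n)}$, and for each pair the parity condition is checked by counting, with Proposition~\ref{prop:parity} supplying sufficiency. Running this for $n=6,8,\dots$ computes the least total support, which is known to be finite by Gardam's unit.
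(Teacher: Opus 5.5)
Your proposal is correct and is essentially the paper's proof. It uses the same pattern (labels, fiber partition, anchored class), the same coordinate-decoupled integer linear system, small solutions via Hadamard-type bounds, avoidance of the proper affine sublattices cut out by same-label disequations, Lemma~\ref{lem:coarse} to absorb accidental merges of fibers, and the same conversion of exponent bounds into a word-length ball.

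The differences are cosmetic. Your Nullstellensatz/generic-line avoidance replaces the paper's bounded steps along lattice generators; note that each disequation $\vec t_i\ne\vec t_{i'}$ couples the three coordinates, so either work in the product of the three parameter lattices or fix one coordinate where the original pair differs. The final translation normalization is unnecessary, since the exponent bound already places both supports in the ball.
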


\begin{proof}
Let $\alpha\beta=1$ be a nontrivial unit of $\F_2[\P]$ with
$A=\supp\alpha$, $B=\supp\beta$, $|A|+|B|=n$. We produce a unit support
pair of the same sizes inside an explicit ball. The argument has five
steps: extract a finite combinatorial \emph{pattern} from $(A,B)$ (Step~1);
express the pattern as an integer linear system in the exponent vectors
(Step~2); find a small integer solution (Step~3); check that the small
solution again realizes a unit (Step~4); convert exponent bounds into a
ball bound (Step~5).

\emph{Step 1: the pattern of the given unit.} Write the elements of $A$
and $B$ in coordinates: $a_i=t_i\,c_i$ and $b_j=u_j\,d_j$ with exponent
vectors $t_i,u_j\in\Z^3$ and coset labels $c_i,d_j\in\{1,a,b,ab\}$. The
\emph{pattern} of $(A,B)$ consists of two finite pieces of data: (i) the
list of the $n$ coset labels --- at most $4^{\,n}$ possibilities; and (ii)
the partition of the cell set $A\times B$ into the fibers of the product
map $(i,j)\mapsto a_ib_j$ --- at most $(n^2)^{n^2}$ possibilities. Since
$\alpha\beta=1$ over $\F_2$, Proposition~\ref{prop:parity} says precisely
that in this partition the class of cells realizing the product $1$ has odd
size and every other class has even size --- a partition of the kind in
Lemma~\ref{lem:coarse}. We call the class realizing $1$ the \emph{anchored
class}.

\emph{Step 2: the integer linear system of a pattern.} Fix the pattern.
Regard the $n$ exponent vectors as unknowns ($3n$ integer variables), the
labels being prescribed by the pattern. Within each class, all cells
realize equal products. First, equal products force equal label products
$c_id_j$; a pattern violating this for some class is realized by no sets
whatsoever and is discarded. Second, once the label products agree, moving
all labels to the right with the conjugation table above turns the equality
of the products of two cells into an equality of two exponent vectors ---
that is, three linear equations, one per coordinate, in the unknowns, with
coefficients in $\{0,\pm1\}$ and at most four nonzero entries per row
(Euclidean row norm $\le2$). The three coordinates never mix, because
conjugation by labels acts by independent sign flips coordinatewise; this
is the \emph{decoupling} of \cite[\S6]{companion}, in its two-sided form
(remark after Thm.~6.2 there). The anchored class contributes the same kind
of equations with an inhomogeneous right-hand side, expressing ``this
product equals $1$''.

\emph{Step 3: a small solution with distinct entries.} The given $(A,B)$
realizes its pattern, so the system of Step~2 is solvable over $\Z$. By
Hadamard's inequality and the standard size bounds for integer solutions of
linear systems \cite[\S17]{Schrijver}, there is a solution of
$\ell^\infty$-norm at most $4^{\,n}\mathrm{poly}(n)$. This solution may
fail the distinctness we need: elements on one side must be pairwise
distinct, which is automatic when their labels differ and is otherwise the
condition that two exponent vectors differ --- avoidance of a proper affine
sublattice of the solution lattice. As in the proof of
\cite[Thm.~6.2]{companion}, the finitely many required avoidances are
achieved by boundedly many steps along generators of the solution lattice.
The steps stay inside the solution lattice, so every class equality and the
anchored condition persist. Call the resulting sets $A'$ and $B'$; their
exponent vectors still have $\ell^\infty$-norm $4^{\,n}\mathrm{poly}(n)$.

\emph{Step 4: the small solution is again a unit pair.} The data
$(A',B')$ satisfy all hypotheses of Lemma~\ref{lem:coarse}: the assignment
of elements to indices is injective on each side (Step~3), within each
class of the pattern all cells realize equal products (the class
equalities), and the anchored class realizes exactly $1$ (the anchored
condition). The lemma yields: in the product multiset of $A'\times B'$ the
element $1$ occurs with odd multiplicity and every other element with even
multiplicity. This is exactly the condition of
Proposition~\ref{prop:parity}, so $(A',B')$ is a unit support pair. The
point of the lemma is that the re-realization may well create
\emph{new} coincidences --- distinct fibers of the original pair may merge
for $(A',B')$ --- but merging even classes keeps fibers even, and an even
class merging into the anchored fiber keeps it odd. Nontriviality
persists since $|A'|=|A|\ge2$ and $|B'|=|B|\ge2$.

\emph{Step 5: from exponent bounds to a ball.} Each of $x,y,z$ is a word
of length at most $4$ in $a,b$, so an element with exponent vector of
$\ell^\infty$-norm $\le M$ and any label has word length $O(M)$. Taking the
maximum over the finitely many patterns gives the explicit constant
$D_{\mathrm u}(n)\le 4^{\,n}\mathrm{poly}(n)$ with
$A'\cup B'\subseteq\Ball{D_{\mathrm u}(n)}$. Decidability of ``some
nontrivial unit has total support $n$'' is now the finite search of
$\Ball{D_{\mathrm u}(n)}$, and the least total support is computable by
trying $n=6,7,8,\dots$ in turn --- some total support is realized, by
\cite{Gardam}, so this terminates.
\end{proof}

\begin{remark}
The proof is characteristic-$2$-specific: only there is the unit equation a
pure support condition. In $\F_p$ the coefficients enter and the pattern
would have to prescribe coefficient sums modulo $p$; we do not pursue this.
As with the non-UP principle, $D_{\mathrm u}$ is far from practical: the
content is decidability, and the practical frontier remains the certified
ball searches of Sections~\ref{sec:P}--\ref{sec:zoo}. By
Theorem~\ref{thm:mu} and Theorem~\ref{thm:ru}, the global answer to
Gardam's question is now pinned between the DRAT-certified pair ``nothing
at all within $\Ball3$'' and ``nothing below $42$ within $\Ball4$,'' and
the open possibility of smaller supports spread across larger balls ---
which Theorem~\ref{thm:decideu} shows to be a finite question.
\end{remark}

\section{Open questions}\label{sec:open}

\begin{question}[{after \cite{DLNV}}]
Does $\F_2[H_4]$ have a nontrivial unit? This is the central proposal of
\cite{DLNV}: $H_4$ as the first candidate to fail the unique product
property while satisfying the trivial units property over $\F_2$. The
evidence for their proposal is now substantial: no unit at all with
supports in their radius-$4$ polycyclic ball \cite[\S6]{DLNV} or in our
$\Ball3$, and no twisted-unitary unit with support in our $\Ball6$ for
any non-identity twist induced by the dihedral symmetry of the presentation
(Section~\ref{sec:zoo}), even though the identical ansatz, handed to an
XOR-aware solver, finds a unit of $\F_2[\P]$ in seconds on a pool seven
times smaller. The concrete next
instances are the unrestricted $\Ball4$ decision and the twisted sweep
beyond radius $6$; the normal forms developed in \cite[\S6]{DLNV} are a
further route.
\end{question}

\begin{question}
Is $42$ the least total support of a nontrivial unit of $\F_2[\P]$ globally?
By Theorem~\ref{thm:decideu} this is decidable; by Theorem~\ref{thm:mu} it
holds within $\Ball4$, and by the census of \cite{DLNV} every unit there
realizes $(21,21)$. Two concrete next steps: extending certification beyond
$\Ball4$ --- the radius-$5$ \textsf{AllSAT} enumeration of \cite{DLNV} did
not terminate within a day, exactly where a certification-oriented
decomposition like our size-split sweep might still reach --- and any
workable improvement of $D_{\mathrm u}$ toward searchable radii.
\end{question}

\begin{question}
Do the localization radii $\ru(G_i)$, $\ru(H_4)$ exceed the non-UP
localization radii of the same groups in a uniform way? Here the two groups
for which $\ru$ is known exactly already disagree, so no uniform law is
available: in $\P$ the unit radius exceeds the non-UP radius by one
($4$ against $3$) and the support threshold grows from $24$ to $42$, a
factor $1.75$; in $G_3$ the two radii \emph{coincide} at $4$
(Proposition~\ref{prop:rug3}), while the support total grows from $16$ to at
most $58$, a factor of at least $3.6$. So parity rigidity does not always
cost a radius, and the support inflation is not a constant factor. What, if
anything, controls either quantity?
\end{question}

\section*{Code and data availability}
All models, search scripts, the DRAT generator and checker workflow, the
explicit $\Ball4$ unit, and logs of every run reported here (including
the radius-$6$ twisted-unitary sweep of $H_4$,
\texttt{h4\_theta\_b6\_sweep\_2026-08-07.log}, and its independent
re-run) are included in the ancillary files and are deposited at Zenodo,
\href{https://doi.org/10.5281/zenodo.22866715}{doi:10.5281/zenodo.22866715}; every positive witness is re-verified by a solver-free
checker requiring only Python. In particular Lemma~\ref{lem:g3isS} and
Proposition~\ref{prop:rug3} are re-derived from scratch --- Gardam's
relators evaluated in our $G_3$ model, the parity profile of the unit, and
the translator search --- by the script
\texttt{g3\_gardam\_unit\_localize.py}, whose output is the log
\texttt{g3\_ru\_localization.log}, and the same computation is one of the
checks in \texttt{verify\_paper3.py}. The companion papers' repositories provide
the certified group models.

\section*{Acknowledgements}
This work continues the program of \cite{companion,sequel}; the author
thanks Andr\'e Nies, Pace Nielsen and Marc Vinyals for correspondence on the
non-UP side of that program.

\end{document}